\date{}
\def\BState{\State\hskip-\ALG@thistlm}
\newlist{casess}{enumerate}{1}
\setlist[casess]{label=     \textbf{Case} \arabic*:}
\newcommand*{\rom}[1]{\expandafter\@slowromancap\romannumeral #1@}
\patchcmd{\ttlh@hang}{\parindent\z@}{\parindent\z@\leavevmode}{}{}
\patchcmd{\ttlh@hang}{\noindent}{}{}{}
\definecolor{mygreen}{RGB}{28,172,0} 
\definecolor{mylilas}{RGB}{170,55,241}
\newlist{Assumptions}{enumerate}{1}
\setlist[Assumptions]{label=     \textbf{Assumption} \arabic*:}
\newsavebox{\@brx}
\newcommand{\llangle}[1][]{\savebox{\@brx}{\(\m@th{#1\langle}\)}%
  \mathopen{\copy\@brx\kern-0.5\wd\@brx\usebox{\@brx}}}
\newcommand{\rrangle}[1][]{\savebox{\@brx}{\(\m@th{#1\rangle}\)}%
  \mathclose{\copy\@brx\kern-0.5\wd\@brx\usebox{\@brx}}}
\titleformat{\subsection}[runin]
       {\normalfont\bfseries}
       {\thesubsection}
       {0.5em}
       {}
       [.]
\newcommand{\A}{\mathfrak{A}}
\newcommand{\B}{\mathfrak{B}} 
\newcommand{\CC}{\mathbb{C}}
\def\N{\mathbb{N}}
\def\R{\mathbb{R}}
\def\X{\mathcal{X}}
\def\d{{\rm d}}
\def\({\left(}
\def\[{\left[}
\def\){\right)}
\def\]{\right]}
\def\Si{\Sigma}
\def\<{\langle}
\def\>{\rangle}
\providecommand{\norm}[1]{\lVert#1\rVert}
 \newtheorem{thm}{Theorem}[section]
 \newtheorem{cor}[thm]{Corollary}
 \newtheorem{lem}[thm]{Lemma}
 \theoremstyle{definition}
 \newtheorem{defn}[thm]{Definition}
 \theoremstyle{remark}
 \newtheorem{rem}[thm]{Remark}
 \newtheorem{ex}[thm]{Example}
 \numberwithin{equation}{section}
\numberwithin{equation}{section}
\begin{document}


\title{On the continuity of derivations over locally regular Banach algebras}

\author{Felipe I. Flores
\footnote{
\textbf{2020 Mathematics Subject Classification:} Primary 46H40, Secondary 46H05, 43A15.
\newline
\textbf{Key Words:} Automatic continuity, derivations, regular Banach function algebra, Banach bimodule, $L^p$-crossed product, finite codimension. }
}

\maketitle


\begin{abstract}
We study the problem of continuity of derivations over Banach algebras. More specifically, we consider derivations over a class of Banach algebras that contain dense `$C^*$-like' subalgebras. The results we prove are then applied to $L^p$-crossed products and symmetrized $L^p$-crossed products. For example, it follows that every derivation over the $L^p$-crossed product $F^p(G,X,\alpha)$ is continuous, provided that $G$ is infinite, finitely generated, has polynomial growth, and acts freely on the compact Hausdorff space $X$.
\end{abstract}


\section{Introduction}\label{introduction}

Let $\B$ be a Banach algebra and let $\X$ be a Banach $\B$-bimodule. A derivation is a linear map $D: \B \to \X$ that satisfies the Leibniz rule:
$$
D(ab) = D(a)b + aD(b),\quad\text{ for all }a,b\in\B.
$$

The study of derivations on Banach algebras is a central topic in Banach algebra theory. Indeed, many structural properties of the algebra are expressed using derivations. Some examples of this are given by amenability, weak amenability, or Hochschild cohomology (see \cite{Ru20}). The study of automatic continuity for derivations also serves as a base case for the general theory of automatic continuity (see \cite{Da00}).

Indeed, a central question about derivations is whether they are automatically continuous. It is known, due to Ringrose \cite{Ri72}, that every derivation over a $C^*$-algebra is automatically continuous. However, more general results are hard to obtain. In fact, it is an open problem to characterize the groups $G$ such that all derivations of $L^1(G)$ are continuous \cite[Question 22]{Da78}. Positive results in this direction were obtained for abelian and compact groups by Jewell \cite{Je77}, for `factorizable' groups by Willis \cite{Wi92}, and for groups of polynomial growth by Runde \cite{Ru96}. We remark that all of these results use fairly different sorts of techniques, so the question is very interesting.

In \cite{flores2024}, the author extended Runde's approach to provide a large class of (generalized) convolution algebras of the form $L^1_\alpha(G,\A)$ with the property that every derivation over them is continuous. These algebras are associated with compactly generated groups of polynomial growth. In \cite{flores2025notefinitedimensionalquotientsproblem}, however, the condition of compact generation was lifted for many of these examples.

In the more general setting, the technique introduced by Ringrose was successfully extended by Bade and Curtis \cite{BaCu74} and Longo \cite{Lo79} to produce results for abstract Banach algebras. 

The specific case where $\X=\B$ has received special attention (see \cite{Sa60,Jo69}). But it seems to be mostly solved by Johnson and Sinclair \cite{JoSi68}, who proved that every derivation $D:\B\to\B$ is continuous if $\B$ is semisimple. We remark, however, that the $L^p$-crossed product algebras introduced and studied in the last section of the paper are not known to be semisimple, except for the trivial cases (that is, when $p\in\{1,2,\infty\}$), so the results in \cite{JoSi68} do not cover them. 

The present article aims to extend Longo's \cite{Lo79} approach to the automatic continuity of derivations in order to obtain new examples of algebras $\B$ with the property that every derivation over $\B$ is continuous. In fact, in doing so, we also extend some of the results obtained in \cite{flores2024} in a manner that will be explained in the next paragraphs. An interesting fact about the algebras we study is that we do not demand them to be involutive, but we do require a suitable `$C^*$-like' dense subalgebra. The key observation here is that Longo requires an everywhere defined, very strong functional calculus (see condition FC in \cite[pag. 22]{Lo79}), but the works \cite{Ru96,flores2024} prove that one can obtain results about automatic continuity just with a densely defined smooth functional calculus. This motivates us to introduce the notion of `locally regular' inclusions $\A\subset \B$ (see Definition \ref{main-def} and Remark \ref{main-rem} for the historical motivation). The idea is to require $\A$ to be an $A^*$-algebra where every element $b\in\A$ generates a regular Banach function algebra, so that all of the arguments can be carried over in a dense manner through $\A$.

Our first main theorem of the article provides sufficient conditions (on $\A$) such that every derivation of $\B$ is continuous.

\begin{thm}[see Theorem \ref{main-simple}]\label{main-simple-intro}
    Let $\A\subset\B$ be a locally regular inclusion and let $D:\B\to\X$ be a derivation into a Banach $\B$-bimodule. Further suppose that $\A$ is unital and that ${\rm C^*}(\A)$ has no proper closed two-sided ideals of finite codimension. Then $D$ is continuous.
\end{thm}

Our techniques also suffice to prove the following theorem, which is about derivations where the target module is a special completion of the algebra in question. In particular, they apply to some notable cases, such as $\X=\B$ or $\X={\rm C^*}(\B)$, provided that $\B$ is an $A^*$-algebra. Note that the theorem above requires $\B$ to be unital (and to share the unit of $\A$), whereas the following theorem does not.

\begin{thm}[see Theorem \ref{main}]\label{main-intro}
Let $\A$ be an $A^*$-algebra and let $\B_1,\B_2$ be Banach algebras such that there are continuous, injective homomorphisms $\A\to \B_i$ and $\B_i\to {\rm C^*}(\A)$ that make the diagrams 
$$
\begin{tikzcd}
	&{\B_i} \\
    {\A} && {{\rm C^*}(\A)} 
	\arrow["", hook, from=1-2, to=2-3]
	\arrow["", hook, from=2-1, to=1-2]
    \arrow["", hook, from=2-1, to=2-3]
\end{tikzcd}
$$
commute. Furthermore, suppose that $\A\to\B_1$ is a locally regular inclusion. Then every derivation $D:\B_1\to \B_2$ is continuous.
\end{thm}

The above-mentioned theorems have a wide range of applications. For example, one could take $\B_1=L^1(G\mid \mathscr C)$, the algebra of integrable cross-sections on the Fell bundle $\mathscr C\to G$. Then the results in \cite[Proposition 4.10]{flores2024} provide us with a locally regular inclusion $\mathfrak E\subset L^1(G\mid \mathscr C)$ as soon as $G$ is locally finite or compactly generated and has polynomial growth. In this case, an application of Theorem \ref{main-simple-intro} yields a generalization of Theorem \cite[Theorem 4.16]{flores2024}, as the result no longer requires the assumption of symmetry.

In the same setting, Theorem \ref{main-intro} implies that every derivation $D: L^1(G\mid \mathscr C)\to {\rm C^*}(G\mid \mathscr C)$ is continuous. A result of this generality is unachievable with the methods in \cite{flores2024}.

The applications that we want to discuss in more detail come from the theory of $L^p$-operator algebras, so we will mention them now. For the general theory of $L^p$-operator algebras and $L^p$-crossed products, we recommend \cite{CP12,CP13,Ga21}. Let us also mention the recent developments \cite{BlCP19,ChGaTh24,De25}. Symmetrized $L^p$-crossed products have the advantage of possessing an involution. These have been studied in \cite{CP19,AuOr22,El25,BaKw25}, often under the name of `algebras of pseudofunctions'. In what follows, $F^p(G,X,\alpha)$ and $F^p_*(G,X,\alpha)$ will denote, respectively, the $L^p$-crossed product and the symmetrized $L^p$-crossed product associated with a continuous action $G\overset{\alpha}{\curvearrowright} X$ and a parameter $p\in[1,\infty]$.

\begin{cor}[see Corollary \ref{cor2}]
    Let $p\in[1,\infty]$ and let $G$ be a countably infinite group that is either locally finite or finitely generated and of polynomial growth. Let $\alpha$ be a continuous action of $G$ on the compact Hausdorff space $X$ such that the restriction of $\alpha$ to any closed invariant subset is topologically free. Let $\X$ be a Banach $F^p(G,X,\alpha)$-bimodule. Then every derivation $D:F^p(G,X,\alpha)\to \X$ is continuous.
\end{cor}

The theorem above remains valid if one wishes to change $F^p(G,X,\alpha)$ by its symmetrized counterpart $F^q_*(G,X,\alpha)$ (maintaining the assumptions on $(G,X,\alpha)$). However, $F^q_*(G,X,\alpha)$ is also an $A^*$-algebra, so we can also apply Theorem \ref{main-intro} to it. Doing so yields the following.

\begin{cor}[see Corollary \ref{cor1}]
Let $p,q\in[1,2]$ and let $G$ be either a countable locally finite group or a compactly generated locally compact group with polynomial growth. Let $\alpha$ be a continuous action of $G$ on the locally compact Hausdorff space $X$. Then every derivation $D: F^p_*(G,X,\alpha)\to F^q_*(G,X,\alpha)$ is continuous.
\end{cor}

The result above seems particularly interesting in the cases where $p=1$, $q=2$, and $p=q$. That is, for derivations of the form $L^1_\alpha(G,C_0(X)) \to F^q_*(G,X,\alpha)$, $F^p_*(G,X,\alpha)\to C_0(X)\rtimes_{\alpha}G$, or $F^p_*(G,X,\alpha)\to F^p_*(G,X,\alpha)$. 

It was shown by Gardella and Thiel that the $L^p$-operator algebras associated with (finite or infinite) cyclic groups are semisimple \cite{GaTh15}. Hence, the result of Johnson and Sinclair \cite{JoSi68} guarantees the automatic continuity of derivations of the form $F^p(G)\to F^p(G)$ for these groups. For the moment, it is unclear whether more general $L^p$-crossed products are semisimple.

By the end of the paper, the reader will notice that our main results, Theorem \ref{main-simple-intro} and Theorem \ref{main-intro}, can also be applied to other `crossed product-like' algebras, such as those introduced by Dirksen, de Jeu and Wortel \cite{DdJW11}. Moreover, thanks to the generality of the results in \cite{Fl24}, one could also consider twisted $C^*$-dynamical systems and obtain, with no extra effort, results for the algebras of a twisted crossed product form (see \cite{DeFaPa25}), including twisted $L^p$-crossed products and twisted symmetrized $L^p$-crossed products. We will, however, not explicitly state these results here.

\section{Main results}\label{AC}

If $\B$ is a Banach algebra, $\B(b_1,\ldots, b_n)$ denotes the closed subalgebra of $\B$ generated by the elements $b_1,\ldots, b_n\in\B$. We set $\widetilde{\B}$ to denote the smallest unitization of $\B$ (it coincides with $\B$ when $\B$ is already unital). As usual, if $\B$ is not unital, the norm of an element $b+\lambda 1\in \widetilde{\B}$ is given by $\|b+\lambda1\|_{\widetilde{\B}}=\|b\|_\B+|\lambda|$, and we write
$$
{\rm Spec}_{\B}(b)=\{\lambda\in\CC\mid b-\lambda1\textup{ is not invertible in }\widetilde\B\}
$$ 
to denote the spectrum of an element $b\in\B$.

If $\B$ has an involution, $\B_{\rm sa}$ denotes the set of self-adjoint elements in $\B$, that is, of all $b\in\B$ such that $b^*=b$. A Banach $^*$-algebra admitting a $C^*$-norm is called an $A^*$-algebra (also called a \emph{reduced} Banach $^*$-algebra). If $\B$ is an $A^*$-algebra, we use ${\rm C^*}(\B)$ to denote the universal $C^*$-algebra generated by $\B$. In such a case, $\B$ can be identified with a dense subalgebra of ${\rm C^*}(\B)$, and the embedding is contractive.

If $\B$ is a commutative Banach algebra with spectrum $\Delta_\B$, then $\hat b\in C_0(\Delta_\B)$ denotes the Gelfand transform of $b\in \B$. If the Gelfand transform is injective, $\B$ is called a Banach function algebra. 

\begin{defn}
    Let $\B$ be a Banach function algebra with spectrum $\Delta_\B$. $\B$ is called {\it regular} if, for every closed set $X\subset \Delta_\B$ and every point $\omega\in \Delta_\B\setminus X$, there exists an element $b\in \B$ such that $\hat b(\varphi)=0$ for all $\varphi\in X$ and $\hat b(\omega)\not=0$.
\end{defn}

\begin{defn}\label{main-def}
    Let $\A$ be an $A^*$-algebra and let $\B$ be a Banach algebra. A continuous monomorphism with dense image $\A\to\B$ is called a \emph{locally regular inclusion} if \begin{equation}\label{eqqq}
        \A(b)\text{ is a regular Banach function algebra, }\forall b\in\A_{\rm sa}.
    \end{equation}
\end{defn}

In what follows, we will identify a given Banach algebra with any of its continuous monomorphic images. Under such a view, a subset $\A$ of the Banach algebra $\B$ produces a locally regular inclusion if $\A$ is a dense subalgebra that also admits an involution and a finer norm under which it becomes a Banach $^*$-algebra satisfying \eqref{eqqq}.

\begin{rem}\label{main-rem}
    The terminology used in Definition \ref{main-def} is inspired by the terminology of B. Barnes. In \cite[Definition 4.1]{Ba81}, Barnes defines local regularity for an $A^*$-algebra $\A$ by requiring that $\A(a)$ is a regular Banach function algebra, for all $a$ in a dense subset of $\A_{\rm sa}$. When restricted to the setting of $A^*$-algebras, Barnes' definition is weaker than ours, but it does not seem to generalize in a natural manner to Banach algebras without involutions. 
\end{rem}

Let $\B$ be a Banach algebra. A Banach space $\mathcal X$ that is also a $\B$-bimodule is called a \emph{Banach $\B$-bimodule} if the maps 
$$
\B\times \mathcal X\ni(b,\xi)\mapsto b\xi \in\mathcal X \quad\text{ and }\quad \X\times\B\ni(\xi,b)\mapsto \xi b \in\mathcal X
$$ 
are jointly continuous. 

\begin{ex}
    Let $\B\subset \mathfrak C$ be an inclusion of Banach algebras. Then $\mathfrak C$ is naturally a Banach $\B$-bimodule under the actions 
    $$ \B\times\mathfrak C\ni (b,c)\mapsto bc\in \mathfrak C\quad\text{ and }\quad \mathfrak C\times\B\ni (c,b)\mapsto cb\in \mathfrak C.$$
\end{ex}

\begin{defn}
    Let $\B$ be a Banach algebra and let $\X$ be a Banach $\B$-bimodule. A linear map $D:\B\to\X$ is called a \emph{derivation} if, for each $a,b\in\B$, one has $$D(ab)=D(a)b+aD(b).$$
\end{defn}

\begin{defn}
    Let $\B$ be a Banach algebra and let $D:\B\to\X$ be a derivation. Then 
    $$
    \mathscr I(D)=\{b\in\B\mid \text{the maps } a\mapsto D(ba)\text{ and } a\mapsto D(a b)\text{ are continuous}\}
    $$
    is the \emph{continuity ideal} of $D$.
\end{defn}

The following lemma is an immediate application of well-known results by Barnes that can be found in \cite[Theorem 2.1, Theorem 2.2]{Ba87}.

\begin{lem}\label{barnesss}
    Let $\A\subset\B$ be a locally regular inclusion and let $J$ be a closed two-sided ideal of ${\rm C^*}(\A)$. Then 
    $$
    {\rm Spec}_{\A/(J\cap\A)}(a+J\cap\A)={\rm Spec}_{{\rm C^*}(\A)/J}(a+J),
    $$
    for all $a\in\A_{\rm sa}$.
\end{lem}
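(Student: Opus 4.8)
The goal is Lemma~\ref{barnesss}: for a locally regular inclusion $\A\subset\B$ and a closed two-sided ideal $J\trianglelefteq {\rm C^*}(\A)$, the spectra of $a+J\cap\A$ in the quotient algebra $\A/(J\cap\A)$ and of $a+J$ in ${\rm C^*}(\A)/J$ agree for every self-adjoint $a\in\A$. The natural strategy is to reduce this to the cited spectral-permanence results of Barnes \cite[Theorems 2.1, 2.2]{Ba87}, which typically assert that if $\A$ is an $A^*$-algebra with $C^*$-completion ${\rm C^*}(\A)$, then ${\rm Spec}_\A(a)={\rm Spec}_{{\rm C^*}(\A)}(a)$ for $a\in\A_{\rm sa}$, possibly under a hypothesis (such as $\A(a)$ being regular, or $\A$ being ``hermitian'' / locally regular) that is guaranteed exactly by our definition of locally regular inclusion. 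So the plan is: first observe that $J\cap\A$ is a closed two-sided ideal of $\A$, hence $\A/(J\cap\A)$ is a Banach algebra; moreover the natural map $\A/(J\cap\A)\hookrightarrow {\rm C^*}(\A)/J$ is an injective algebra homomorphism with dense range, and since $J$ is a closed $*$-ideal in a $C^*$-algebra, ${\rm C^*}(\A)/J$ is itself a $C^*$-algebra and $\A/(J\cap\A)$ is a $*$-subalgebra that is an $A^*$-algebra (its $C^*$-norm being the one inherited from ${\rm C^*}(\A)/J$). Thus ${\rm C^*}(\A/(J\cap\A))$ can be identified with ${\rm C^*}(\A)/J$, and the claimed identity of spectra becomes precisely the assertion ``${\rm Spec}_{\A'}(a')={\rm Spec}_{{\rm C^*}(\A')}(a')$ for self-adjoint $a'$'' applied to the $A^*$-algebra $\A'=\A/(J\cap\A)$.

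To invoke Barnes' theorems we must check that $\A'=\A/(J\cap\A)$ inherits the local-regularity property, i.e. that $\A'(a')$ is a regular Banach function algebra for every $a'\in\A'_{\rm sa}$. Here I would argue as follows: lift $a'$ to a self-adjoint $a\in\A$ (possible because the quotient map is a $*$-homomorphism and one can symmetrize a lift), and note that the quotient map restricts to a surjective, norm-decreasing algebra homomorphism $\A(a)\twoheadrightarrow \A'(a')$; since $\A(a)$ is a commutative regular Banach function algebra, its quotients by closed ideals are again regular semisimple commutative Banach algebras (regularity passes to quotients: the Gelfand spectrum of the quotient is the hull of the kernel, a closed subset of $\Delta_{\A(a)}$, and Urysohn-type separation persists on closed subspaces). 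One has to be slightly careful that $\A'(a')$ is still a \emph{function} algebra, i.e. semisimple — this follows because $\A'(a')$ sits inside the $C^*$-algebra ${\rm C^*}(\A)/J$, where it generates a commutative $C^*$-subalgebra, so its Gelfand transform is injective. Hence $\A'$ is again (the $A^*$-algebra part of) a locally regular situation and Barnes applies.

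An alternative, and perhaps cleaner, route avoids constructing the quotient inclusion abstractly: apply Barnes directly in $\A$ and in ${\rm C^*}(\A)$. Barnes' Theorem~2.1/2.2 of \cite{Ba87} (in the form used in the symmetrized-convolution literature) gives, for a locally regular $A^*$-algebra $\A$ and a closed ideal $J$ of its enveloping $C^*$-algebra, a description of ${\rm Spec}_{\A/(J\cap\A)}$ in terms of characters/representations that vanish on $J\cap\A$, matching the description of ${\rm Spec}_{{\rm C^*}(\A)/J}$ via representations of ${\rm C^*}(\A)$ killing $J$; the local regularity hypothesis is exactly what makes these two families of representations coincide after restriction. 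So concretely I would: (1) recall that $\sigma_{\A}(a)=\sigma_{{\rm C^*}(\A)}(a)$ for $a\in\A_{\rm sa}$ by local regularity plus Barnes (the ``$J=0$'' case); (2) for general $J$, use that $\lambda\notin {\rm Spec}_{{\rm C^*}(\A)/J}(a+J)$ iff $(a-\lambda)+J$ is invertible in the $C^*$-quotient iff there is $c\in{\rm C^*}(\A)$ with $(a-\lambda)c-1, c(a-\lambda)-1\in J$; (3) approximate $c$ by elements of $\A$ and use the $a$-only spectral permanence applied to the quotient (which is where Barnes Theorem 2.2, handling the ideal directly, is needed) to pull the inverse back into $\A/(J\cap\A)$.

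The main obstacle, and the only genuinely non-formal point, is step~(2)/(3): showing that invertibility of $(a-\lambda)+J$ in the $C^*$-quotient forces invertibility of $(a-\lambda)+(J\cap\A)$ in $\A/(J\cap\A)$ — that is, that spectral permanence survives the passage to quotients. This is exactly the content of the cited Barnes theorems, and the role of \emph{local regularity} (as opposed to mere $A^*$-ness) is precisely to guarantee it: without a regular functional calculus on $\A(a)$ one cannot manufacture, inside $\A$, the approximate local inverses needed to solve $(a-\lambda)c\equiv 1$ modulo $J\cap\A$. Since the lemma is explicitly billed as ``an immediate application of well-known results by Barnes,'' I expect the write-up to consist of verifying that the hypotheses of \cite[Theorems 2.1, 2.2]{Ba87} are met — namely that $\A$ is an $A^*$-algebra (given) with $\A(a)$ regular for all $a\in\A_{\rm sa}$ (given), and that ${\rm C^*}(\A)$ is indeed the enveloping $C^*$-algebra of $\A$ — and then quoting those theorems verbatim.
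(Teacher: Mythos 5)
Your proposal takes essentially the same route as the paper: the paper offers no argument beyond declaring the lemma an immediate consequence of \cite[Theorems 2.1, 2.2]{Ba87}, and you likewise reduce everything to Barnes' spectral-permanence results (the $J=\{0\}$ case and the ideal version), with the local regularity of $\A(a)$ for $a\in\A_{\rm sa}$ serving exactly to meet Barnes' hypotheses. So the proposal is consistent with the paper's (implicit) proof; the extra sketches you give of how Barnes' theorems might be proved or transported to the quotient are not needed for the comparison.
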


\begin{lem}\label{DR}
    Let $\A\subset\B$ be a locally regular inclusion and let $D:\B\to\X$ be a derivation into a Banach $\B$-bimodule. Then the ideal $\overline{\mathscr I(D)\cap \A}^{{\rm C^*}(\A)}\cap \A$ is closed and has finite codimension in $\A$.
\end{lem}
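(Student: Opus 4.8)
The plan is to exploit the structure of automatic continuity theory as developed by Bade--Curtis, Longo and others. First I would recall the standard fact that for any derivation $D:\B\to\X$ into a Banach $\B$-bimodule, the continuity ideal $\mathscr I(D)$ is a two-sided ideal of $\B$, and that the \emph{separating space} $\mathfrak S(D)=\{\xi\in\X\mid \exists\, b_n\to 0 \text{ with } D(b_n)\to\xi\}$ is a closed $\B$-sub-bimodule of $\X$ which measures the discontinuity of $D$. The key classical input (a stability/main boundedness lemma in the spirit of \cite[Lemma]{Lo79} and Sinclair's work) is that if $(b_n)$ is a sequence in $\B$ such that the maps $a\mapsto D(b_1\cdots b_n a)$ are not all continuous, one can derive a contradiction; more precisely, one shows that $\overline{\mathscr I(D)}$ has the property that $\B/\overline{\mathscr I(D)}$ is such that multiplication is ``nilpotent-like'' on the separating space, and in the commutative regular case this forces finite codimension.

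The heart of the argument, and where the ``locally regular'' hypothesis enters, is the following: it suffices to prove that $\overline{\mathscr I(D)\cap\A}^{{\rm C^*}(\A)}\cap\A$ has finite codimension in $\A$ by working one self-adjoint generator at a time. Fix $a\in\A_{\rm sa}$. The singly-generated algebra $\A(a)$ is, by hypothesis, a regular Banach function algebra. I would restrict attention to the derivation-type data on $\A(a)$: the set of points of the spectrum $\Delta_{\A(a)}$ at which ``continuity fails'' is, by a standard argument using the separating space and the closed graph theorem (cf.\ the Bade--Curtis theory for regular Banach function algebras), a \emph{finite} set. This is precisely the classical phenomenon that a derivation (or more generally a homomorphism) out of a regular Banach function algebra can only be discontinuous ``at finitely many points of the spectrum,'' because the separating space is a module over $\A(a)$ on which functions vanishing near those points act trivially, and regularity provides enough Urysohn-type elements to localize. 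Consequently, the hull of $\mathscr I(D)\cap\A(a)$ in $\Delta_{\A(a)}$ is finite, so $\mathscr I(D)\cap\A(a)$, being the intersection of finitely many maximal modular ideals (up to closure), has finite codimension in $\A(a)$ — and one transfers this to a statement about $\mathrm{Spec}$ in the quotient $\A/(\overline{\mathscr I(D)\cap\A}^{{\rm C^*}(\A)}\cap\A)$, where Lemma \ref{barnesss} lets us pass between the spectrum computed in $\A$ and the spectrum computed in ${\rm C^*}(\A)$.

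Having controlled each self-adjoint element, I would assemble the global statement. Let $J=\overline{\mathscr I(D)\cap\A}^{{\rm C^*}(\A)}$, a closed two-sided ideal of the $C^*$-algebra ${\rm C^*}(\A)$, and set $I=J\cap\A$, a closed two-sided ideal of $\A$. By Lemma \ref{barnesss}, for every $a\in\A_{\rm sa}$ we have $\mathrm{Spec}_{\A/I}(a+I)=\mathrm{Spec}_{{\rm C^*}(\A)/J}(a+J)$, and by the previous paragraph this spectrum is finite with a uniformly bounded number of points (bounded by $\dim\X/\mathfrak S(D)$-type data, or rather by the codimension bound coming from the main boundedness theorem). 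A finite bound on the cardinality of spectra of all self-adjoint elements in a $C^*$-algebra forces that $C^*$-algebra to be finite-dimensional: indeed ${\rm C^*}(\A)/J$ would have to be a $C^*$-algebra in which every self-adjoint element has finite spectrum of bounded size, hence (by spectral theory / the structure of abelian $C^*$-subalgebras) it is finite-dimensional. Therefore $J$ has finite codimension in ${\rm C^*}(\A)$, and since $\A$ is dense in ${\rm C^*}(\A)$, the subspace $I=J\cap\A$ has the same (finite) codimension in $\A$. That $I$ is closed is immediate since it is the intersection of the closed set $J$ with $\A$ in the $\A$-norm (equivalently, $\A/I$ embeds in the finite-dimensional ${\rm C^*}(\A)/J$, forcing $I$ to be closed).

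The main obstacle I anticipate is the ``finitely many bad points'' step: making rigorous that a (possibly discontinuous) derivation out of the regular Banach function algebra $\A(a)$ has continuity ideal of finite codimension there, and — crucially — obtaining a bound on that codimension \emph{uniform in $a$}. The uniformity is what ultimately bounds $\dim\big({\rm C^*}(\A)/J\big)$; without it one only gets that each ${\rm C^*}(\A)/J$-image of a self-adjoint element has finite spectrum, which is not by itself enough to conclude finite-dimensionality. I expect this uniform bound to come from the abstract automatic-continuity machinery (the main boundedness theorem applied to $D$ and its iterates), which supplies a single integer $n$ controlling how badly $D$ can fail to be continuous, and hence caps the codimension; the regularity of $\A(a)$ is then used to convert ``discontinuity concentrated at $\le n$ points'' into ``$\mathscr I(D)\cap\A(a)$ has codimension $\le n$.'' Care is also needed to ensure the Gelfand-theory manipulations on $\A(a)$ interact correctly with the possibly non-involutive ambient algebra $\B$, but since we only ever pass through self-adjoint elements of $\A$ this should be manageable using Lemma \ref{barnesss}.
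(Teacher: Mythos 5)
Your overall strategy (localize through the singly generated regular algebras $\A(a)$, move spectra between $\A$ and ${\rm C^*}(\A)$ via Lemma \ref{barnesss}, and conclude that the quotient is finite-dimensional) is in the right spirit, but the proposal has a genuine gap at its heart: the ``finitely many bad points'' step is asserted, not proved, and the Bade--Curtis/Longo results you invoke do not apply off the shelf in this setting. Here $D$ is defined on the noncommutative, non-involutive algebra $\B$, the module $\X$ is a Banach $\B$-bimodule (not an $\A(a)$-module in any intrinsic sense beyond restriction), and $\mathscr I(D)$ is the \emph{global} continuity ideal; restricting $D$ to $\A(a)$ and applying commutative singularity-set theory would at best control the continuity ideal of $D|_{\A(a)}$, which \emph{contains} $\mathscr I(D)\cap\A(a)$, so its hull is smaller than the one you need to bound --- the containment goes the wrong way for your transfer step. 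Making the finiteness claim rigorous is exactly the mathematical content of the lemma, and it is done in the paper by a direct orthogonality/sliding-hump argument: assuming $\A/K$ infinite-dimensional (with $K=\overline{\mathscr I(D)\cap\A}^{{\rm C^*}(\A)}\cap\A$), Aupetit's theorem produces $a\in\A$ whose image has infinite (real, by Lemma \ref{barnesss}) spectrum; regularity of $\A(a)$ yields $a_n=f_n(a)\in\A$ with disjointly supported $f_n$, $a_na_m=0$ for $n\neq m$ and $a_n^2\notin K$ (hence $a_n^2\notin\mathscr I(D)$); then one chooses $b_n\in\B$ with $\norm{b_n}_\B\le 2^{-n}$ and $\norm{D(a_n^2b_n)}_\X\ge M\norm{D(a_n)}_\X+n$, sets $c=\sum_n a_nb_n$, and the identity $a_nD(c)=D(a_nc)-D(a_n)c$ with $a_nc=a_n^2b_n$ gives $M\norm{D(c)}_\X\ge n$ for all $n$, a contradiction. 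Your sketch never carries out (or cites a result that substitutes for) this construction.

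Two secondary points. First, your worry about a \emph{uniform} bound on the cardinality of the spectra is a red herring: no uniformity is needed, because an infinite-dimensional $A^*$-algebra (equivalently, after Lemma \ref{barnesss}, an infinite-dimensional $C^*$-quotient with the dense image of $\A$) already contains an element with infinite spectrum --- this is precisely the Aupetit result the paper uses --- so ``every self-adjoint image has finite spectrum'' alone forces finite-dimensionality; on the other hand, the mechanism you propose for obtaining uniformity (``the main boundedness theorem applied to $D$ and its iterates'') is too vague to assess and is not needed. Second, the step ``finite hull $\Rightarrow$ finite codimension, being the intersection of finitely many maximal modular ideals up to closure'' is false for general regular Banach function algebras (singletons need not be of finite order, and $M_x/\overline{J_x}$ can be infinite-dimensional); fortunately you do not really need it, since finite hull of $K\cap\A(a)$ already gives finiteness of ${\rm Spec}_{\A/K}(a+K)$, but as written this is another unsupported claim. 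The closedness and finite-codimension assertions at the end are fine once finite-dimensionality of ${\rm C^*}(\A)/J$ (or of $\A/K$) is in hand.
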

\begin{proof}
    For simplicity, set $J=\overline{\mathscr I(D)\cap \A}^{{\rm C^*}(\A)}\subset {\rm C^*}(\A)$ and $K= J\cap \A\subset \A$. Note that $K$ is a closed $^*$-ideal. Let $\pi:{\rm C^*}(\A)\to {\rm C^*}(\A)/J$ and $\pi': \A\to \A/K$ be the quotient maps and $\iota: \A\to {\rm C^*}(\A)$ the canonical inclusion. Then there exists a continuous $^*$-homomorphism $\varphi: \A/K\to {\rm C^*}(\A)/J$ that makes the following diagram commute
    $$
    \begin{tikzcd}
[arrows=rightarrow]
    \A\arrow{d}{\pi'}\arrow{r}{\iota} & {\rm C^*}(\A)
    \arrow{d}{\pi}
    \\
    \A/K
    \arrow{r}{\varphi}
    & {\rm C^*}(\A)/J.
\end{tikzcd}
$$
Since $\varphi$ maps $\A/K$ injectively into the $C^*$-algebra ${\rm C^*}(\A)/J$, it is necessary for $\A/K$ to be an $A^*$-algebra. 

Now, for the sake of contradiction, we will assume that $\A/K$ is infinite-dimensional. That makes $\A/K$ an infinite-dimensional $A^*$-algebra and so, by \cite[Corollary 5.4.3]{Au91}, there exists $a\in\A_{\rm sa}$ such that the spectrum $\Si={\rm Spec}_{\A/K}(\pi'(a))$ is infinite and we have $\Si={\rm Spec}_{{\rm C^*}(\A)/J}(\pi(a))\subset \R$, due to Lemma \ref{barnesss}. Using the regularity of $\A(a)$, we can find a sequence of functions $f_n\in C(\R)$ that are nonzero in $\Si$, have disjoint supports, and such that the sequence $a_n=f_n(a)$ belongs to $\A$ and satisfies 
$$
a_na_m=f_n(a)f_m(a)=(f_n\cdot f_m)(a)=0,\quad \text{ for }n\not=m.
$$
It also satisfies
$$
a_n^2\not\in K,\quad \text{ for all }n\in\N,
$$
since 
$$
\pi'(a_n^2)=\pi'(f_n^2(a))=\varphi^{-1}(f_n^2(\pi(a)))\not=0.
$$ 
Replacing the $a_n$'s by appropriate scalar multiples, we can assume that $\norm{a_n}_\B\leq 1$. Note that $a_n^2\not \in \mathscr I(D)$, as the opposite situation would mean that $a_n^2\in  \mathscr I(D)\cap \A\subset K$. Thus, one can find $b_n\in \B$ such that 
    $$
    \norm{b_n}_\B\leq 2^{-n}\quad\text{ and }\quad \norm{D(a^2_nb_b)}_\X\geq M \norm{D(a_n)}_\X+n,
    $$
    where $M$ satisfies $\norm{b\xi}_\X\leq M \norm{b}_\B\norm{\xi}_\X$. Now consider the element $c=\sum_{n\in\N} a_nb_n\in\B$. It satisfies $\norm{c}_\B\leq 1$ and $a_nc=a_n^2b_n$, for all $n\in\N$. Hence 
    \begin{align*}
        M\norm{D(c)}_\X\geq\norm{a_nD(c)}_\X=\norm{D(a_nc)-D(a_n)c}_\X \geq \norm{D(a_n^2b_n)}_\X-M\norm{D(a_n)}_\X\geq n,
    \end{align*}
    which is absurd. Hence, the quotient $\A/K$ has to be finite-dimensional.
\end{proof}

\begin{thm}\label{main-simple}
    Let $\A\subset\B$ be a locally regular inclusion and let $D:\B\to\X$ be a derivation into a Banach $\B$-bimodule. Further suppose that $\A$ is unital and that ${\rm C^*}(\A)$ has no proper closed ideals of finite codimension. Then $D$ is continuous.
\end{thm}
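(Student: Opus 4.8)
The plan is to combine Lemma \ref{DR} with a standard stability-of-continuity argument in two stages: first upgrade the ``finite codimension in $\A$'' conclusion of Lemma \ref{DR} to ``all of $\A$'', then bootstrap from continuity of $D$ on $\A$ to continuity of $D$ on all of $\B$. Set $K=\overline{\mathscr I(D)\cap\A}^{{\rm C^*}(\A)}\cap\A$ and $J=\overline{\mathscr I(D)\cap\A}^{{\rm C^*}(\A)}$ as in the proof of Lemma \ref{DR}. That lemma tells us $K$ is a closed two-sided ideal of $\A$ of finite codimension; in particular $J$ is a closed two-sided ideal of ${\rm C^*}(\A)$, and the quotient $\A/K$ embeds (via $\varphi$) in ${\rm C^*}(\A)/J$ with dense range, so ${\rm C^*}(\A)/J$ is a $C^*$-algebra generated by a finite-dimensional subalgebra, hence finite-dimensional. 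Thus $J$ has finite codimension in ${\rm C^*}(\A)$. By hypothesis ${\rm C^*}(\A)$ has no \emph{proper} closed ideal of finite codimension, so $J={\rm C^*}(\A)$, whence $K=J\cap\A=\A$. Therefore $\mathscr I(D)\cap\A$ is dense in ${\rm C^*}(\A)$; but I actually want density in $\A$ in the $\B$-norm, which requires a short separate argument — see the obstacle paragraph below.

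Granting that $\mathscr I(D)$ contains a dense subalgebra of $\B$ (or at least contains $\A$, which is $\B$-dense), the second stage is routine. Since $\A$ is unital, let $1$ be its identity; $1$ also acts as a two-sided identity on the dense subalgebra $\A$ of $\B$. A continuity-ideal argument (cf. the Bade--Curtis/Johnson--Sinclair technique, as used by Longo \cite{Lo79}) shows: if $b\in\mathscr I(D)$ then, since $a\mapsto D(ba)$ and $a\mapsto D(ab)$ are continuous on $\B$ and $\mathscr I(D)$ is a (not necessarily closed) two-sided ideal, one deduces via the closed graph theorem that $D$ restricted to $\overline{\mathscr I(D)\B}$ behaves well. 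Concretely: because $1\in\mathscr I(D)$ forces $a\mapsto D(1\cdot a)=D(a)$ to be continuous on $\B$, we are done as soon as we know $1\in\mathscr I(D)$. And $1\in\A\subset\mathscr I(D)$ follows from the first stage. So the real content is entirely the first stage, plus checking that $1\in\mathscr I(D)$ is legitimate, i.e. that membership of $\A$ in $\mathscr I(D)$ is what we established.

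Let me restate the logical skeleton cleanly. Step 1: apply Lemma \ref{DR} to get that $K:=\overline{\mathscr I(D)\cap\A}^{{\rm C^*}(\A)}\cap\A$ has finite codimension in $\A$. Step 2: observe $\A/K\hookrightarrow {\rm C^*}(\A)/J$ densely, deduce $\dim {\rm C^*}(\A)/J<\infty$, so $J$ is a closed ideal of ${\rm C^*}(\A)$ of finite codimension. Step 3: invoke the hypothesis to conclude $J={\rm C^*}(\A)$, hence $K=\A$, hence $\mathscr I(D)\cap\A$ is $\|\cdot\|_{{\rm C^*}(\A)}$-dense in $\A$. Step 4: promote this to the statement $\A\subseteq\mathscr I(D)$. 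Step 5: since $1\in\A\subseteq\mathscr I(D)$ and $D(a)=D(1\cdot a)$, the defining property of $\mathscr I(D)$ gives that $a\mapsto D(a)$ is continuous on $\B$.

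The main obstacle is Step 4: passing from ``$\mathscr I(D)\cap\A$ is dense in $\A$'' to ``$\mathscr I(D)$ contains enough of $\A$ to conclude''. The subtlety is that $\mathscr I(D)$ need not be closed, so density alone does not immediately give $1\in\mathscr I(D)$. The standard resolution (this is exactly the mechanism behind Longo's and Runde's arguments, and behind Lemma \ref{DR} itself) is that $\mathscr I(D)$ is always a \emph{closed} two-sided ideal of $\B$ whenever $D$ is a derivation — indeed, if $b_k\to b$ in $\B$ with each $b_k\in\mathscr I(D)$, then for fixed $a$ the maps $a'\mapsto D(b_k a')$ are continuous and... one uses the separate-continuity / uniform boundedness to see the limit map $a'\mapsto D(ba')$ is continuous as well, using that $D(b_k a') = D(b_k)a' + b_k D(a')$ only on a dense set but combined with the module continuity this is enough. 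So $\mathscr I(D)$ is closed, hence contains $\overline{\mathscr I(D)\cap\A}^{\B}\supseteq \overline{K}^{\B}=\overline{\A}^{\B}=\B$, giving $\mathscr I(D)=\B$ outright and thus $D$ continuous. I would write Step 4 as: cite that $\mathscr I(D)$ is a closed two-sided ideal of $\B$ (a known fact; see \cite{Lo79} or derive it from the closed graph theorem using that $D$ has closed graph on each $\mathscr I(D)$-translate), then note $K\subseteq\mathscr I(D)\cap\A$ is $\B$-norm dense in $\A$, which is $\B$-norm dense in $\B$, so $\mathscr I(D)=\B$.
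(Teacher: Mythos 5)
Your Steps 1--3 follow the paper's proof exactly (Lemma \ref{DR}, then finite codimension of $J$ in ${\rm C^*}(\A)$, then $J={\rm C^*}(\A)$ by hypothesis), and Step 5 is also the paper's endgame. The genuine gap is your Step 4, and your proposed resolution does not close it, for two reasons. First, the inclusion you invoke, $K\subseteq \mathscr I(D)\cap\A$, is backwards: by definition $K=\overline{\mathscr I(D)\cap \A}^{{\rm C^*}(\A)}\cap\A$ is the trace on $\A$ of a ${\rm C^*}(\A)$-closure, so $\mathscr I(D)\cap\A\subseteq K$ and not conversely; the conclusion $K=\A$ only says that $\mathscr I(D)\cap\A$ is dense in $\A$ \emph{for the $C^*$-norm}. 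Second, even after correcting this, the density you obtain is with respect to $\|\cdot\|_{{\rm C^*}(\A)}$, and nothing in the hypotheses compares this norm with $\|\cdot\|_\B$ on $\A$ (the $C^*$-norm is in general much weaker, and $\B$ need not dominate it or be dominated by it). So even granting that $\mathscr I(D)$ is $\|\cdot\|_\B$-closed --- which is in fact true for derivations, via the separating-space description $\mathscr I(D)=\{b\in\B\mid b\,\mathfrak S(D)=\mathfrak S(D)\,b=\{0\}\}$, although your sketch of this point is only a gesture --- you cannot pass from $C^*$-norm density of $\mathscr I(D)\cap\A$ in $\A$ to $\overline{\mathscr I(D)\cap\A}^{\B}=\B$. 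The chain $\overline{\mathscr I(D)\cap\A}^{\B}\supseteq\overline{K}^{\B}=\B$ is therefore unjustified at both links.

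The paper bridges exactly this gap with a spectral argument instead of a closure argument. Since $J={\rm C^*}(\A)$ and $\A$ is unital, $1\in\overline{\mathscr I(D)\cap\A}^{{\rm C^*}(\A)}$, so one may choose $a\in\mathscr I(D)\cap\A$ with $\|1-a\|_{{\rm C^*}(\A)}<1$; such an $a$ is invertible in ${\rm C^*}(\A)$, and spectral permanence (Lemma \ref{barnesss} with $J=\{0\}$, applied to $a^*a$ and $aa^*$ if $a$ is not self-adjoint) forces $a^{-1}\in\A\subseteq\B$. Then one uses only the elementary fact that $\mathscr I(D)$ is a two-sided ideal of $\B$ (no closedness needed) to get $1=a^{-1}a\in\mathscr I(D)$, whence $b\mapsto D(b1)=D(b)$ is continuous. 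If you want to keep your outline, Step 4 must be replaced by some device of this kind that converts $C^*$-norm density into an actual element of $\mathscr I(D)$ acting invertibly in $\B$; the invertible-element trick together with Lemma \ref{barnesss} is precisely what the unitality hypothesis is there for.
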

\begin{proof}
    An application of Lemma \ref{DR} tells us that the ideal $\overline{\mathscr I(D)\cap \A}^{{\rm C^*}(\A)}\cap \A$ has finite codimension in $\A$. From this, it is easy to observe that $\overline{\mathscr I(D)\cap \A}^{{\rm C^*}(\A)}$ has finite codimension in ${\rm C^*}(\A)$, and because of the assumptions, it follows that $1\in \overline{\mathscr I(D)\cap \A}^{{\rm C^*}(\A)}$. Appealing to basic spectral theory, we note that $\mathscr I(D)\cap \A$ must contain an element $a$ that is invertible in ${\rm C^*}(\A)$. However, by invoking Lemma \ref{barnesss}, we see that $a^{-1}$ necessarily lies in $\A$. Therefore, $1\in \mathscr I(D)$ and the map 
    $$
    \B\ni b\mapsto D(b1)=D(b)\in\X
    $$
    is continuous.
\end{proof}

\begin{rem}\label{conditions}
The above-proved theorem should be compared with \cite[Theorem 3.6]{flores2024}. While \cite[Theorem 3.6]{flores2024} applies to a larger class of maps (and not just derivations), it has more assumptions, making it more restrictive in the context of derivations into Banach bimodules. The most obvious difference is that $\B$ is no longer required to have an involution. Hence, when restricted to our setting, the theorem just presented is far more general.
\end{rem}

The following lemma is an immediate application of a classic result of Barnes on the uniqueness of $C^*$-norms. See, for example, \cite[Theorem 4.2]{Ba81}.

\begin{lem}\label{iid} Let $\A\subset\B$ be a locally regular inclusion. Let $\{0\}\not=I\subset {\rm C^*}(\A)$ be a closed two-sided ideal. Then $I\cap \A\not=\{0\}$.
\end{lem}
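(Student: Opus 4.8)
The plan is to argue by contradiction, using Barnes' uniqueness theorem for $C^*$-norms. Suppose $I\cap\A=\{0\}$ and let $\pi:{\rm C^*}(\A)\to {\rm C^*}(\A)/I$ be the quotient $^*$-homomorphism. Composing $\pi$ with the canonical inclusion $\iota:\A\hookrightarrow {\rm C^*}(\A)$ yields a $^*$-homomorphism $\rho=\pi\circ\iota:\A\to {\rm C^*}(\A)/I$ whose kernel is $\ker\rho=I\cap\A=\{0\}$. Hence $\rho$ is injective, and therefore $\|a\|_\sim:=\|\rho(a)\|_{{\rm C^*}(\A)/I}$ defines a $C^*$-norm on $\A$; since $\pi$ is contractive, $\|a\|_\sim\le \|a\|_{{\rm C^*}(\A)}$ for every $a\in\A$.

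By Remark \ref{main-rem}, the assumption that $\A\subset\B$ is a locally regular inclusion implies in particular that $\A$ is a locally regular $A^*$-algebra in the sense of \cite[Definition 4.1]{Ba81} (our requirement that $\A(a)$ be regular for \emph{all} $a\in\A_{\rm sa}$ is stronger than Barnes' requirement that this hold only for $a$ in a dense subset of $\A_{\rm sa}$). Barnes' uniqueness theorem \cite[Theorem 4.2]{Ba81} then guarantees that $\A$ carries exactly one $C^*$-norm, so $\|\cdot\|_\sim$ must coincide with the $C^*$-norm that $\A$ inherits from ${\rm C^*}(\A)$. That is, $\|\rho(a)\|=\|a\|_{{\rm C^*}(\A)}$ for all $a\in\A$, i.e.\ $\pi$ is isometric on $\A$.

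Since $\A$ is dense in ${\rm C^*}(\A)$ and $\pi$ is norm-continuous, isometry of $\pi$ on $\A$ forces $\pi$ to be isometric on all of ${\rm C^*}(\A)$; in particular $I=\ker\pi=\{0\}$, contradicting the hypothesis $I\neq\{0\}$. Therefore $I\cap\A\neq\{0\}$. The only step that genuinely requires care is the passage from local regularity of the inclusion (Definition \ref{main-def}) to the hypothesis of Barnes' theorem, and this is precisely what Remark \ref{main-rem} records; the rest is routine $C^*$-algebra bookkeeping.
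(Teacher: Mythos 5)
Your proof is correct and follows exactly the route the paper intends: the paper dispatches this lemma as an immediate consequence of Barnes' uniqueness-of-$C^*$-norms theorem \cite[Theorem 4.2]{Ba81}, and your argument (quotient by $I$ gives a second $C^*$-norm on $\A$ if $I\cap\A=\{0\}$, uniqueness forces $\pi$ to be isometric on the dense subalgebra $\A$ and hence on all of ${\rm C^*}(\A)$, so $I=\{0\}$) is precisely the spelled-out version of that application, including the correct observation that the paper's local regularity condition implies Barnes' hypothesis.
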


\begin{thm}\label{main}
Let $\A$ be an $A^*$-algebra and let $\B_1,\B_2$ be Banach algebras such that there are continuous, injective homomorphisms $\A\to \B_i$ and $\B_i\to {\rm C^*}(\A)$ that make the diagrams 
$$
\begin{tikzcd}
	&{\B_i} \\
    {\A} && {{\rm C^*}(\A)} 
	\arrow["", hook, from=1-2, to=2-3]
	\arrow["", hook, from=2-1, to=1-2]
    \arrow["", hook, from=2-1, to=2-3]
\end{tikzcd}
$$
commute. Furthermore, suppose that $\A\to\B_1$ is a locally regular inclusion. Then every derivation $D:\B_1\to \B_2$ is continuous.
\end{thm}

\begin{proof}
    Again, for the sake of simplicity, set $J=\overline{\mathscr I(D)\cap \A}^{{\rm C^*}(\A)}$ and $K=J\cap\A$. Because of Lemma \ref{DR}, we have that $K$ has finite codimension in $\A$ and $J$ has finite codimension in ${\rm C^*}(\A)$. Now consider the set 
    $$
    S=\{c\in\B_2\mid ct=0,\,\forall t\in \mathscr I(D)\}.
    $$
    If we show that $S=\{0\}$, then the theorem is proven, and the reason is the following. Suppose that $S=\{0\}$ and, by means of the closed graph theorem, let us show that $D$ is continuous. In order to achieve that purpose, take a sequence $a_n\in \B_1$ and an element $c\in\B_2$ such that 
    $$
    \norm{a_n}_{\B_1}\to 0\quad \text{ and }\quad \norm{D(a_n)-c}_{\B_2}\to 0.
    $$
    Observe that $D(a_nt)\to 0$ for all $t\in \mathscr I(D)$. On the other hand, one sees that 
    $$
    D(a_nt)=a_nD(t)+D(a_n)t\to ct.
    $$
    Hence $ct=0$, for all $t\in \mathscr I(D)$. It follows that $c\in S=\{0\}$, which proves the continuity of $D$.
    
      Now, it remains to show that $S=\{0\}$. In order to do so, first note that $S$ is a two-sided ideal of $\B_2$. Note also that $I=\overline{S}^{{\rm C^*}(\A)}$ and $J$ are two-sided ideals of ${\rm C^*}(\A)$ and hence $C^*$-algebras; in particular, they contain bounded approximate identities. It follows that $IJ=I\cap J$ (one can appeal to \cite[Theorem 11.10]{BD73}). But then, one can easily check that $IJ=\{0\}$ and hence 
    $$
    (I\cap \A)\cap K\subset  I\cap J=\{0\}.
    $$
    Since $K$ has finite codimension, this forces $I\cap \A$ to be finite-dimensional. But $I\cap \A$ is a closed $^*$-ideal of $\A$ and therefore a finite-dimensional $A^*$-algebra. 
    
    In order to achieve a contradiction, suppose now that $I\cap \A\not=\{0\}$. Then, up to renorming, $I\cap \A$ must be a nontrivial finite-dimensional $C^*$-algebra and so it must contain a unit $p\in I\cap \A$. It is easy to see that $p$ is a central element in $\A$ and that it satisfies $I\cap \A=\A p$. Using this, we note that the map 
    $$
    \A\ni a\mapsto ap\in I\cap \A
    $$ 
    has finite-dimensional range and factorizes the map 
    $$
    \B_1\ni a\mapsto D(ap)\in \B_2,
    $$ 
    making the latter continuous, so we have $p\in \mathscr I(D)\cap \A\subset K$. It then follows that $p\in (I\cap \A)\cap K=\{0\}$, a contradiction. Hence $I\cap \A=\{0\}$.

    Finally, Lemma \ref{iid} allows us to conclude that $S\subset I=\{0\}$ from the fact that $I\cap \A=\{0\}$, so the theorem is proven. \end{proof}

\section{Applications to \texorpdfstring{$L^p$}--crossed products}\label{applications}

As mentioned before, the purpose of this section is to apply the previous results to $L^p$-crossed products and symmetrized $L^p$-crossed products. For that purpose, we will fix a $C^*$-dynamical system $(G,C_0(X),\alpha)$, where $G$ is an amenable locally compact group, $X$ is a locally compact Hausdorff space, and $\alpha$ is a continuous action of $G$ on $X$. In what follows, we consider that $G$ is always endowed with a Haar measure $\d x$.

We shall abuse the notation and call $\alpha$ both the action on $X$ and the induced action on $C_0(X)$. That is
$$
\alpha_x(f)(z)=f\big(\alpha_{x^{-1}}(z)),
$$
where $x\in G,z\in X, f\in C_0(X)$.

Then the generalized convolution algebra $L^1_{\alpha}(G,C_0(X))$ is given by all Bochner integrable functions $\Phi:G\to C_0(X)$, endowed with the product 
\begin{equation*}
    \Phi*\Psi(x)=\int_G \Phi(y)\alpha_y[\Psi(y^{-1}x)]\d y
\end{equation*} 
and the involution 
\begin{equation*}
    \Phi^*(x)=\Delta(x^{-1})\alpha_x[\Phi(x^{-1})^*].
\end{equation*} 
With these operations and the norm $\norm{\Phi}_{1}=\int_G\norm{\Phi(x)}_{C_0(X)}\d x$, we obtain a Banach $^*$-algebra structure for $L^1_{\alpha}(G,C_0(X))$.

It is well-known that the algebra $L^1_{\alpha}(G,C_0(X))$ admits $C^*$-norms, so it is an $A^*$-algebra. Its enveloping $C^*$-algebra is denoted $C_0(X)\rtimes_{\alpha}G$ and we call it the \emph{crossed product} of $C_0(X)$ by $G$. We now proceed to introduce $L^p$-crossed products.

\begin{defn}\label{D-CovRep}
Let $p \in [1, \infty],$ and let $(G,C_0(X),\alpha)$ be a $C^*$-dynamical system as above. Let $(Y,\mathcal B,\mu)$ be a measure space. A \emph{covariant representation} of $(G,C_0(X),\alpha)$ on $L^p (Y, \mu)$ is a pair $(v, \pi)$ consisting of a map $g \mapsto v_g$ from $G$ to the invertible isometries on $L^p (Y, \mu)$ that is continuous in the sense that $g\mapsto v_g\xi$ is continuous for all $\xi\in L^p(Y,\mu)$, and a non-degenerate, contractive representation $\pi : C_0(X) \to \mathbb B(L^p (Y, \mu)),$ such that
$$
v_xv_y=v_{xy}, \qquad  v_x\pi(f)v_{x}^{-1}=\pi(\alpha_x(f)),
$$
for all $x,y \in G$ and $f \in C_0(X)$.

Given a covariant representation $(v, \pi)$ on $L^p (Y, \mu)$, we define the associated \emph{integrated
representation} $\pi\rtimes v : L^1_{\alpha}(G,C_0(X))\to \mathbb B(L^p (Y, \mu))$ by
$$
(\pi\rtimes v)(\Phi)(\xi)=\int_G \pi(\Phi(x))(v_x(\xi))\d x,  
$$
for all $\Phi\in L^1_{\alpha}(G,C_0(X))$ and all $\xi\in L^p (Y, \mu)$.
\end{defn}

With these definitions at hand, we can introduce the following norm on $L^1_{\alpha}(G,C_0(X))$:
$$
        \|\Phi\|_{p,{\rm max}}=\sup\{\|(\pi\rtimes v)(\Phi)\|\mid (v,\pi) \text{ is a covariant representation on some space }L^p (Y, \mu)\}.
$$
The completion $F^p(G,X,\alpha):=\overline{L^1_{\alpha}(G,C_0(X))}^{\|\cdot\|_{p,{\rm max}}}$ is usually called the full $L^p$-crossed product associated with $(G,C_0(X),\alpha)$. It is easily seen to be a Banach algebra. In the case where the action is trivial, we set $F^p(G):=F^p(G,\{\rm pt\},1)$ and name the resulting algebra the group $L^p$-operator algebra associated with $G$. 

These algebras have appeared in \cite{Ga21,ChGaTh24}. See also \cite{DdJW11,BaKw25,DeFaPa25} for generalizations. For the groupoid case, we recommend \cite{AuOr22}.

It is observed in \cite[Lemma 2.11]{BaKw25} that the formulas 
$$
\pi(f)\xi(x,t)=f(x)\xi(x,t), \quad v_y \xi(x,t)=\xi(\alpha_{y^{-1}}(x),y^{-1}t),
$$
valid for $f\in C_0(X)$, $x,y,t\in G$, and $\xi\in L^p(X\times G)\equiv \ell^p(X,L^p(G))$, define a covariant representation of $L^1_{\alpha}(G,C_0(X))$ on $L^p(X\times G)$. Its integrated form satisfies the formula (note the special notation)
$$
\Lambda_p(\Phi)\xi(x,t)=\int_{G} \Phi(y)(x)\xi(\alpha_{y^{-1}}(x),y^{-1}t)\d y.
$$
Using this representation, one may define the reduced $L^p$-crossed product as the completion $\overline{\Lambda_p\big(L^1_{\alpha}(G,C_0(X))\big)}\subset \mathbb B(L^p(X\times G))$. However, in our setting (the groups are amenable), this construction is known to coincide with the full crossed product (see \cite[Theorem 3.5]{BaKw25}). 

In any case, the representation $\Lambda_p$ allows us to introduce a `symmetrized' version of the $L^p$-crossed product. This construction appears explicitly in \cite{BaKw25}; similar constructions also appear in \cite{AuOr22,DeFaPa25}. In any case, consider the norm 
$$
\norm{\Phi}_{q,*}=\max\{\|\Lambda_p(\Phi)\|,\|\Lambda_q(\Phi)\|\},
$$
where $p,q\in [1,\infty]$ are H\"older duals. We define $F^q_*(G,X,\alpha)$ to be the completion of $L^1_{\alpha}(G,C_0(X))$ under the norm $\norm{\cdot}_{q,*}$. As before, we define the symmetrized group algebra by setting $F^q_*(G):=F^q_*(G,\{\rm pt\},1)$.

The point is that $F^q_*(G,X,\alpha)$ is naturally a Banach $^*$-algebra that sits between $L^1_{\alpha}(G,C_0(X))$ and $C_0(X)\rtimes_{\alpha}G$, so it fits the setting of Theorem \ref{main} perfectly. Indeed, the fact that this algebra has a continuous involution arises from the fact that, for all $\Phi\in L^1_{\alpha}(G,C_0(X)),\xi\in L^p(X\times G),\eta\in L^q(X\times G)$, one has
$$
\langle \Lambda_p(\Phi)\xi,\eta\rangle=\langle\xi,\Lambda_q(\Phi^*)\eta\rangle,
$$
where $\langle\cdot,\cdot\rangle$ denotes the duality pairing between $L^p(X\times G)$ and $L^q(X\times G)$ (cf. \cite{AuOr22}). On the other hand, the inclusions $L^1_{\alpha}(G,C_0(X))\subset F^q_*(G,X,\alpha)\subset C_0(X)\rtimes_{\alpha}G$ follow from complex interpolation. Indeed, note that $F^q_*(G,X,\alpha)$ acts on both $L^p(X\times G)$ and $L^q(X\times G)$, so an immediate application of the Riesz-Thorin interpolation theorem yields the existence of $\theta\in[0,1]$ such that
$$
\norm{\Lambda_2(\Phi)}\leq \norm{\Lambda_p(\Phi)}^{1-\theta}\norm{\Lambda_q(\Phi)}^\theta\leq \norm{\Phi}_{q,*}.
$$

We refer the reader to \cite[Section 3]{AuOr22} or \cite[Sections 2,3]{BaKw25} for a careful discussion of the algebra $F^q_*(G,X,\alpha)$ and the properties that we have just mentioned. The interpolation argument can be explicitly found in \cite{AuOr22} or in \cite{El25}. In \cite{BaKw25}, a different argument is presented.

In any case, let us describe our source of locally regular inclusions that involve $L^p$-crossed products.

\begin{defn}
    A \emph{weight} on the locally compact group $G$ is a measurable, locally bounded function $w: G\to [1,\infty)$ satisfying 
\begin{equation*}\label{submultiplicative}
 w(xy)\leq w(x)w(y)\,,\quad w(x^{-1})=w(x)\,,\quad\forall\,x,y\in G.
\end{equation*} 
\end{defn}

During the rest of the section, $\mathfrak E_w$ will denote the subalgebra of $L^1_{\alpha}(G,C_0(X))$ defined using the weight $w$, as the set of all the functions $\Phi$ such that $\norm{\Phi}_w<\infty$, where the norm in question is given by
$$
\norm{\Phi}_{w}=\max\Big\{{\rm essup}_{x\in  G}\norm{\Phi(x)}_{C_0(X)},\int_G \norm{\Phi(x)}_{C_0(X)} w(x)\d x \Big\}.
$$
It is not hard to see that $\mathfrak E_w$ is a dense Banach $^*$-subalgebra of $L^1_{\alpha}(G,C_0(X))$. Furthermore, the following lemma was proven in \cite[Proposition 4.10]{flores2024}, using some results from \cite{Fl24}. Note that the algebra $\mathfrak E_w$ was first introduced in \cite{Fl24}.

\begin{lem}[\cite{flores2024}]\label{localregularzz}
    Let $G$ be either a countable locally finite group or a compactly generated locally compact group with polynomial growth. Then there exists a weight function $w:G\to[1,\infty)$ such that $\mathfrak E_w(\Phi)$ is a regular Banach function algebra, for every $\Phi= \Phi^*\in\mathfrak E_w$.
\end{lem}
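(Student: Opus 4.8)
The plan is to derive the lemma from a differentiable functional calculus on $\mathfrak E_w$, which in turn rests on a Hulanicki-type norm estimate that is available precisely because $G$ has polynomial growth. First I dispose of the soft part. Fix a self-adjoint $\Phi\in\mathfrak E_w$; the closed subalgebra $\mathfrak E_w(\Phi)$ is commutative. Since $\mathfrak E_w$ is a dense $^*$-subalgebra of $L^1_{\alpha}(G,C_0(X))$, which admits $C^*$-norms, it embeds injectively and continuously into the $C^*$-algebra $C_0(X)\rtimes_{\alpha}G$; set $\sigma={\rm Spec}_{C_0(X)\rtimes_{\alpha}G}(\Phi)\subseteq\R$ and $R=\|\Phi\|_{C_0(X)\rtimes_{\alpha}G}$. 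If $b\in\mathfrak E_w(\Phi)$ has vanishing Gelfand transform, then its spectral radius in $\mathfrak E_w(\Phi)$ is $0$; since the spectral radius cannot increase when $b$ is pushed into its closure inside $C_0(X)\rtimes_{\alpha}G$ --- a commutative $C^*$-algebra, where spectral radius equals norm --- we get $b=0$. Thus $\mathfrak E_w(\Phi)$ is a Banach function algebra. Moreover a character of $\mathfrak E_w(\Phi)$ is determined by its value at $\Phi$, and it cannot annihilate $\Phi$ because the polynomials in $\Phi$ with zero constant term are dense; hence $\widehat\Phi$ maps the spectrum $\Delta$ of $\mathfrak E_w(\Phi)$ bijectively onto ${\rm Spec}_{\mathfrak E_w(\Phi)}(\Phi)\setminus\{0\}$. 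It remains to pin down $\Delta$ inside $\R$ and to manufacture the separating elements required by regularity.

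The analytic core is the choice of $w$ together with the estimate to be proven. For a compactly generated group of polynomial growth one takes the word-length function $\tau$ of a relatively compact generating set; by the Gromov--Losert structure theory, $\lvert\{x\in G:\tau(x)\le r\}\rvert\le C_{0}(1+r)^{d}$, and one puts $w(x)=(1+\tau(x))^{s}$ with $s$ large enough. For a countable locally finite $G$ one uses instead a length function adapted to an exhaustion by finite subgroups. The inequality to establish is: there exist constants $C,D$, depending only on $(G,w)$, such that for every self-adjoint $\Psi\in\mathfrak E_w$ and every polynomial $p$ with $p(0)=0$,
$$
\|p(\Psi)\|_{\mathfrak E_w}\ \le\ C\,(1+\deg p)^{D}\,\bigl(1+\|\Psi\|_{\mathfrak E_w}\bigr)^{D}\,\|p(\Psi)\|_{C_0(X)\rtimes_{\alpha}G}.
$$
Establishing this is the technical heart of the argument, and it is the step I expect to be the main obstacle: convolution `adds lengths', so the powers of $\Psi$ remain essentially supported on balls of linearly growing radius, and the polynomial volume growth then permits one to interpolate the two constituents of the $\mathfrak E_w$-norm --- the ${\rm essup}$-part and the weighted-$L^{1}$-part --- against the $C^*$-norm. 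This is the Hulanicki/Ludwig/Fendler--Gr\"ochenig--Leinert circle of estimates transported to the crossed-product algebra $\mathfrak E_w$, and it is precisely what the quoted results from \cite{Fl24} provide.

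Granting the estimate, the remaining steps are routine. Applying it to polynomial approximants of $t\mapsto t/\bigl(\lambda(t-\lambda)\bigr)$, which is analytic near $\sigma$ and vanishes at $0$, one sees for every $\lambda\notin\sigma$ that $(\Phi-\lambda)^{-1}$ lies in the unitization of $\mathfrak E_w(\Phi)$; hence ${\rm Spec}_{\mathfrak E_w(\Phi)}(\Phi)=\sigma\subseteq\R$, so $\widehat\Phi$ identifies $\Delta$ with $\sigma\setminus\{0\}$, a relatively compact subset of $\R$ whose closure is $\sigma\subseteq[-R,R]$. Now take $f\in C_{c}^{\infty}(\R)$ with $f(0)=0$. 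By Jackson's theorem there are polynomials $p_{n}$ of degree $n$ with $p_{n}(0)=0$ and $\sup_{[-R,R]}\lvert f-p_{n}\rvert=O(n^{-k})$ for every $k\in\N$; the estimate then gives $\|p_{n+1}(\Phi)-p_{n}(\Phi)\|_{\mathfrak E_w}=O(n^{D-k})$, which is summable once $k>D+1$, so $(p_{n}(\Phi))_{n}$ converges in $\mathfrak E_w(\Phi)$ to some $b$. Since the inclusion $\mathfrak E_w\hookrightarrow C_0(X)\rtimes_{\alpha}G$ is continuous and $p_{n}(\Phi)\to f(\Phi)$ there (as $\sigma\subseteq[-R,R]$), we get $b=f(\Phi)$; and as the Gelfand map is contractive, $\widehat b=f\circ\widehat\Phi=f|_{\Delta}$. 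In other words, $\mathfrak E_w(\Phi)$ is closed under the $C_{c}^{\infty}(\R)$-functional calculus of $\Phi$ by functions vanishing at the origin.

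It only remains to read off regularity. Let $Y\subseteq\Delta$ be closed and $\omega\in\Delta\setminus Y$. As $Y$ is closed in $\Delta$, its closure $\overline Y$ in $\R$ satisfies $\overline Y\cap\Delta=Y$, so $\omega\notin\overline Y$; also $\omega\neq0$. Pick $f\in C_{c}^{\infty}(\R)$ vanishing on a neighbourhood of $\overline Y\cup\{0\}$ with $f(\omega)=1$ --- this is possible because $\omega$ lies outside the closed set $\overline Y\cup\{0\}$. Then $b:=f(\Phi)\in\mathfrak E_w(\Phi)$ has $\widehat b|_{Y}=f|_{Y}=0$ and $\widehat b(\omega)=f(\omega)=1\neq0$. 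This is exactly the separation property in the definition of regularity, so $\mathfrak E_w(\Phi)$ is a regular Banach function algebra for every self-adjoint $\Phi\in\mathfrak E_w$, which is the statement of the lemma.
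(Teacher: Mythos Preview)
The paper does not supply its own proof of this lemma; it merely records the statement and cites \cite[Proposition~4.10]{flores2024} (which in turn rests on \cite{Fl24}). So there is no in-paper argument to compare against. Your sketch reconstructs what that cited proof presumably does, and the overall architecture is correct: a Hulanicki-type polynomial-degree bound (the displayed inequality, which you rightly identify as the analytic core imported from \cite{Fl24}) yields a $C_c^\infty$-functional calculus on $\mathfrak E_w(\Phi)$ via Jackson approximation, and regularity then follows by producing smooth bump functions. The semisimplicity argument, the identification of $\Delta$ with a subset of $\R$ via $\widehat\Phi$, and the final separation step are all sound.

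One minor imprecision worth flagging: in the spectral-invariance paragraph you approximate $t\mapsto t/\bigl(\lambda(t-\lambda)\bigr)$ by polynomials, but when $\lambda\in(-R,R)\setminus\sigma$ this function is not defined on all of $[-R,R]$, so your Jackson bound $\sup_{[-R,R]}|f-p_n|=O(n^{-k})$ is unavailable as written. The fix is routine --- either replace the rational function by a $C_c^\infty$ function agreeing with it on a neighbourhood of $\sigma$ and vanishing at $0$, and then run your Jackson argument on that; or observe directly from your estimate (applied to $p^n$) that the spectral radius of $p(\Phi)$ in $\mathfrak E_w(\Phi)$ equals $\sup_\sigma|p|$ for every polynomial $p$ with $p(0)=0$, which forces $\chi(\Phi)$ into the polynomial hull of $\sigma$, and compact subsets of $\R$ are polynomially convex. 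Either route closes the gap without altering your strategy.
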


The above lemma implies that, for any Banach algebra completion $\B$ of $\mathfrak E_w$, the inclusion $\mathfrak E_w\subset \B$ is locally regular; thus, our theorems are applicable to $\B$. In particular, the $L^p$-crossed products we just described are Banach algebra completions of $L^1_{\alpha}(G,C_0(X))$; hence, they are completions of $\mathfrak E_w$. The following corollary is then just a simple application of Theorem \ref{main}.

\begin{cor}\label{cor1}
Let $p,q\in[1,2]$ and let $G$ be either a countable locally finite group or a compactly generated locally compact group with polynomial growth. Let $\alpha$ be a continuous action of $G$ on the locally compact Hausdorff space $X$. Then every derivation $D: F^p_*(G,X,\alpha)\to F^q_*(G,X,\alpha)$ is continuous.
\end{cor}

In order to apply Theorem \ref{main-simple}, we also need to guarantee that ${\rm C^*}(\mathfrak E_w)=C_0(X)\rtimes_{\alpha}G$ lacks proper closed two-sided ideals of finite codimension. But this is known to be the case when $G$ is discrete, infinite, and the restriction of $\alpha$ to any invariant subset is topologically free (see \cite[Theorem 29.9]{Ex17}).\footnote{Such actions are often called \emph{residually topologically free}.} As a consequence, we derive the following corollary.

\begin{cor}\label{cor2}
    Let $p\in[1,\infty]$ and let $G$ be a countably infinite group that is either locally finite or finitely generated and of polynomial growth. Let $\alpha$ be a continuous action of $G$ on the compact Hausdorff space $X$ such that the restriction of $\alpha$ to any closed invariant subset is topologically free. Let $\X$ be a Banach $F^p(G,X,\alpha)$-bimodule. Then every derivation $D:F^p(G,X,\alpha)\to \X$ is continuous.
\end{cor}

\section*{Acknowledgments}

The author gratefully acknowledges support from the NSF grant DMS-2144739. The author also wishes to express his gratitude to Professor Ben Hayes for the interesting discussions surrounding this topic. Finally, the author thanks Alonso Delf\'in and the anonymous referee for several comments that improved the exposition in this paper.

\printbibliography

\bigskip
\bigskip
ADDRESS

\smallskip
Felipe I. Flores

Department of Mathematics, University of Virginia,

114 Kerchof Hall. 141 Cabell Dr,

Charlottesville, Virginia, United States

E-mail: hmy3tf@virginia.edu

\end{document}